\pgfplotsset{every axis/.append style={
    axis x line=middle,    % put the x axis in the middle
    axis y line=middle,    % put the y axis in the middle
    axis line style={<->}, % arrows on the axis
    xlabel={$x$},          % default put x on x-axis
    ylabel={$y$},          % default put y on y-axis
    },
    cmhplot/.style={color=blue,mark=none,line width=1pt,<->},
    soldot/.style={color=blue,only marks,mark=*},
    holdot/.style={color=blue,fill=white,only marks,mark=*},
}
\tikzset{>=stealth}
\newtheoremstyle{Assump}%
  {3pt}% (space above)
  {3pt}% (space below)
  {\itshape}% (body font)
  {}% (indent amount)
  {\bfseries}% {theorem head font}
  {.}% {punctuation after theorem head}
  {.5em}% {space after theorem head}
  {\thmname{#1} \thmnumber{#2} \thmnote{\normalfont#3}}% {theorem head spec}
\newtheorem{theorem}{Theorem}[section]
\newtheorem{prop}[theorem]{Proposition}
\theoremstyle{definition}
\theoremstyle{definition}
\theoremstyle{definition}\newtheorem{remark}[theorem]{Remark}
\theoremstyle{definition}
\numberwithin{equation}{section}
\newcommand{\R}{\mathbb{R}}
\newcommand{\D}{{\bf D}}											% càdlàg fcts
\newcommand{\F}{\mathbb{F}}											% Filtration
\newcommand*\diff{\mathop{}\!\mathrm{d}}								% Differentialzeichen
\newcommand{\E}{\mathbb{E}}											% Erwartungswert
\newcommand{\Pro}{\mathbb{P}}										% Wahrscheinlichkeitsmaß
\newcommand{\indep}{\perp \!\!\! \perp}									% Independence
\newcommand{\ind}{\operatorname{\mathbf{1}}}							% Indikatorfunktion
\newcommand{\convw}[1]{\xRightarrow[ {#1}]{}}							% Schwache Konvergenz
\renewcommand{\max}[1]{\underset{#1}{\operatorname{max}}\;}				% Maximum
\renewcommand{\setminus}{\mathbin{\fgebackslash}}						% Mengensubtraktion / braucht package ``fge''
\newcommand{\dJ}{\diff_{\operatorname{J1}}}								% J1 metric
\newcommand{\dM}{\diff_{\operatorname{M1}}}							% M1 metric
\title{Functional CLTs for subordinated L\'evy models \\in physics, finance, and econometrics}
\author{Andreas S\o jmark\thanks{London School of Economics, Department of Statistics, London, WC2A 2AE, UK. \texttt{a.sojmark@lse.ac.uk}} \,\; and \, Fabrice Wunderlich\thanks{University of Oxford, Mathematical Institute, Oxford, OX2 6GG, UK. \texttt{wunderlich@maths.ox.ac.uk}}  }
\begin{document}
%\WarningsOff
\maketitle

\begin{abstract}
We present a simple unifying treatment of a broad class of applications from statistical mechanics, econometrics, mathematical finance, and insurance mathematics, where (possibly subordinated) L\'evy noise arises as a scaling limit of some form of continuous-time random walk (CTRW). For each application, it is natural to rely on weak convergence results for stochastic integrals on Skorokhod space in Skorokhod's J1 or M1 topologies. As compared to earlier and entirely separate works, we are able to give a more streamlined account while also allowing for greater generality and providing important new insights. For each application, we first elucidate how the fundamental conclusions for J1 convergent CTRWs emerge as special cases of the same general principles, and we then illustrate how the specific settings give rise to different results for strictly M1 convergent CTRWs.
\end{abstract}

%%%%%%%%%%%%%% Introduction
%%%%%%%%%%%%%%%%%%%%%
\section{Introduction}

This paper develops functional CLTs for four different problems coming from statistical mechanics, econometrics, mathematical finance, and insurance mathematics. Each problem was briefly discussed in Section 2 of \cite{andreasfabrice_theorypaper}, but the detailed analysis is the topic of the present paper. Hitherto, they have been treated in completely separate strands of literature and in less generality.

Our goal here is to give a simple, concise, and unified account, noting that, for each of the applications, the analysis ultimately boils down to nuances of the same fundamental problem: namely that of weak convergence for certain stochastic integrals driven by moving averages or, more generally, continuous-time random walks (CTRWs). The starting point for this analysis is the fact that these integrators satisfy functional CLTs with limits described by stable L\'evy processes, typically time-changed by a $\beta$-stable subordinator (and possibly with a tempered jump size distribution for the parent L\'evy process). Throughout, our analysis relies only on the framework developed in \cite{andreasfabrice_theorypaper} and various generalizations thereof presented in \cite{andreasfabrice}.

In the next subsection, we briefly cover the main definitions for moving averages and CTRWs along with the functional limit theorems that are available for such processes. The rest of the paper consists of four sections dedicated to the four applications mentioned above. Throughout, we shall work on Skorokhod space, both under the J1 and the M1 topology. For a brief introduction to these topologies, the reader is invited to consult \cite[Appendix A]{andreasfabrice_theorypaper}.

\subsection{Functional CLTs for moving averages and CTRWs}\label{subsect:CTRW}

Consider i.i.d.~waiting times $J_1, J_2,...$, and let  
\begin{equation}\label{eq:CTRW_N}N(t) \; := \; \max{} \{ k \ge 0 \; : \; L(k) \le t \}
\end{equation}
where 
$L_k:= J_1+\hdots+J_k$ is the time of the $k$-th jump. We can then consider
\begin{align} X^n_t \; := \;  \frac{1}{n^{\frac \beta \alpha}} \; \sum_{k=1}^{N(nt)} \, \zeta_k ,\qquad \zeta_i \; := \; \sum_{j=0}^\infty \; c_j \theta_{i-j},\label{defi:CTRW}
\end{align}
for a sequence of i.i.d.~random variables $\theta_k$ with $-\infty < k < \infty$. We let the $\theta_k$ be in the
normal domain of attraction of a non-degenerate $\alpha$-stable random variable $\tilde{\theta}$ with $0< \alpha< 2$, and let the $J_k$ be in the normal domain of attraction of a $\beta$-stable random variable with $\beta \in (0,1)$. Then 
\eqref{defi:CTRW} is known as a \emph{correlated CTRW} (or uncorrelated if $c_j=0$ for all $j\neq 0$).

In this paper, we restrict to non-negative constants $c_j\ge 0$ for all $j\geq 0$. We shall also assume that if $1<\alpha\le 2$, then either (i) $c_j=0$ for all but finitely many $j\ge 0$, or (ii) the sequence $(c_j)_{j\ge 0}$ is monotone and $\sum_{j=0}^\infty c_j^\rho <\infty$ for some $\rho<1$. If, in \eqref{eq:CTRW_N}, we take $L_k=k$, then the process $X^n$ is known as a \emph{moving average}. For such moving averages we have $ X^n \Rightarrow (\sum_{j=0}^\infty c_j) Z$ on $(\D_{\R^d}[0,\infty), \, \dM)$ (see \cite[Thm.~4.7.1]{whitt}). This was first shown by Avram \& Taqqu \cite{avram}.

A CTRW \eqref{defi:CTRW} is called \emph{uncoupled} if the sequences $(J_i)_{i\geq 1}$ and $(\zeta_k)_{k\geq 1}$ are independent. In the uncoupled case, \cite{meerschaert2} extended the results of \cite{avram}, showing that, for $0<\alpha < 2$,
\begin{equation}\label{eq:CTRW_M1_conv}
 \; X^n \; \; \stackrel{\text{M1}}{\convw{n\to \infty}} \; \; \Bigl(\,\sum_{j=0}^\infty c_j\Bigr)Z_{D^{-1}},
\end{equation}
 where $Z$ is an $\alpha$-stable Lévy process (with $Z_1\sim \tilde \theta$) and $D^{-1}$ is the generalised inverse,
\begin{align} D^{-1}(t) \; = \; \operatorname{inf} \{ \, s\ge 0 \, : \, D(s) \, > \, t\, \}, \label{eq:generalised_inverse} \end{align} 
of a $\beta$-stable subordinator $D$. Moreover, \cite{meerschaert2} confirmed that J1 convergence fails as soon as $c_0>0$ and $c_1>0$. If $c_0>0$ is the only non-zero constant, then M1 convergence was first shown in \cite{Becker-Kern}, but \cite{henry_straka} later showed that the convergence in fact takes place in J1.

Let now $X^n$ be uncorrelated (i.e., take $c_0=1$ and $c_j=0$ for $j\geq 1$ in \eqref{defi:CTRW}). If the jump sizes and jump times form an i.i.d.~sequence $(\zeta_k,J_k)_{k\geq 1}$, but, for each $k\geq 1$, $\zeta_k$ and $J_k$ are allowed to be dependent, then the CTRW is said to be \emph{coupled}. For $Z$ and $D$ as above, \cite{henry_straka,jurlewicz} show that (uncorrelated) coupled CTRWs satisfy
\begin{equation}\label{eq:coupled_CTRW_J1_conv}
 \; X^n \; \; \stackrel{\text{J1}}{\convw{n\to \infty}} \; \; \bigl((Z^-)_{D^{-1}}\bigr)^+,
\end{equation}
where $x^-(t):=x(t-)$ and $x^+(t):=x(t+)$. Notice that, if the $X^n$ are uncoupled, then $Z$ and $D$ will be independent L\'evy processes (hence no common discontinuities a.s.), so the limit simplifies to $Z_{D^{-1}}$ \cite[Lem. 3.9]{henry_straka}, as in \eqref{eq:CTRW_M1_conv} with $c_0=1$ and $c_j=0$ for all $j\neq 0$. 

Throughout the paper, we will work on a family of filtered probability spaces $(\Omega, \mathcal{F}^n, \F^n, \Pro)$, for $n\geq 1$, where each filtration $\F^n$ is given by
\begin{align} \mathcal{F}^n_t \; := \; \sigma(\zeta^n_{N(ns)}, \, N(ns) \, : \, 0\le s \le t), \qquad t\ge 0. \label{eq:filtration_CTRWs} \end{align}
Here, $\zeta^n_{N(ns)}$ is understood to be the random variable $\sum_{k=1}^\infty \zeta^n_k \ind_{\{N(ns)=k\}}$. We stress that all of the results in subsequent sections remain valid if one expands the filtrations $\F^n$ in any way such that, for $X^n$ as in \eqref{defi:CTRW} with $c_j=0$ for $j\ge 1$, the independent increment property $X^n_t-X^n_s \indep \mathcal{F}^n_s$ is preserved for all $0\le s<t$ and $n\geq 1$.

\section{Stable L\'evy noise in statistical mechanics}\label{sect:langevin_longer}

Similarly to the work of Scalas \& Viles \cite{scalas}, which was motivated by the study of damped harmonic oscillators subject to stable L\'evy noise, we are interested in the functional weak convergence
\begin{align}\label{eq:conv_scalas} \int_0^\bullet f_n(s) \; \diff X^n_s \; \; \Rightarrow \; \; \int_0^\bullet f(s) \; \diff X_s
\end{align}
for given deterministic càdlàg functions $f_n,f:\mathbb{R}\rightarrow \mathbb{R}$ and a sequence of CTRW integrators $X^n$ converging to a subordinated stable Lévy process $X$. Taking $f_n=f$ to be a given continuous function, \cite[Thm.~4.15]{scalas} built on \cite{Becker-Kern} to establish  \eqref{eq:conv_scalas} in the M1 topology for uncorrelated, uncoupled CTRWs under the following conditions: Firstly, the i.i.d.~innovations should be symmetric around zero and in the normal domain of attraction of an $\alpha$-stable law, for $\alpha \in (1,2)$. Secondly, the i.i.d.~waiting times should be in the normal domain of attraction of a stable subordinator, for $\beta \in (0,1)$.

Based on \cite{andreasfabrice_theorypaper}, we obtain the following generalisation of the aforementioned result.

\begin{prop}\label{eq:fixed_integrand_J1}
    Let $X^n$ be an uncorrelated CTRW (i.e., as defined in \eqref{defi:CTRW} with $c_j=0$ for all $j\ge 1$) with $\alpha \in (0,2]$, $\beta \in (0,1)$ and scaling limit $X=((Z^-)_{D^{-1}})^+$ from \eqref{eq:coupled_CTRW_J1_conv} which reduces to $X=Z_{D^{-1}}$ if the CTRWs are uncoupled. Let also $f_n,f$ be such that $\dM(f_n,f) \to 0$ as $n\to \infty$ and suppose that $f$ and $X$ have no common discontinuities if $X^n$ is coupled.\\ Then, the convergence \eqref{eq:conv_scalas} holds true on $(\D_{\R}[0,\infty), \dJ)$.
\end{prop}
\begin{proof}
The convergence follows immediately from \cite[Thm. 3.6]{andreasfabrice_theorypaper} since the $X^n$ have good decompositions \cite[Thm.~3.25]{andreasfabrice_theorypaper} and the sequence does satisfy AVCI. Indeed, AVCI is obtained from \cite[Prop.~3.8]{andreasfabrice_theorypaper} due to the fact that $f$ and $X$ almost surely have no common discontinuities. While this has been assumed in the coupled case, the claim can be deduced for uncoupled CTRWs in full analogy to \eqref{eq:subordinated_levy_process_does_not_jump_at_deterministic_times}.
\end{proof}

The relevance of such extensions to larger classes of uncorrelated CTRWs (e.g.~of the coupled type and without symmetry restrictions) was discussed in the conclusion of \cite{scalas} and e.g.~in \cite[Rem.~6.4]{hahn}. Moreover, \cite{scalas} and \cite{hahn} also address the relevance of allowing for convergent sequences of adapted integrands, as opposed to deterministic functions. This is easily handled, provided the integrands satisfy the assumptions of \cite[Thm.~3.6]{andreasfabrice_theorypaper} or \cite[Prop.~3.22]{andreasfabrice_theorypaper} (see also the alternative criteria in \cite[Sect.~4.4]{andreasfabrice_theorypaper}), and that they are adapted to the filtrations \eqref{eq:filtration_CTRWs}.

While \cite{scalas} relies on the M1 topology, J1 convergence in fact holds, as per Proposition \ref{eq:fixed_integrand_J1}. If we instead work with correlated CTRWs, on the other hand, then one can only hope for M1 convergence. As shown in \cite[Prop.~4.6]{andreasfabrice_theorypaper}, correlated CTRWs no longer admit good decompositions and the associated stochastic integrals can fail to converge.  Nevertheless, one can still have convergence under suitable conditions. For example, we obtain the following result as a special case of \cite[Thm.~4.15]{andreasfabrice} for correlated CTRWs \eqref{defi:CTRW} with the sequence $\{c_k\}$ satisfying the technical condition
\begin{align} \begin{cases} \sum_{i=1}^\infty \, \sum_{j=i}^\infty c_j \, < \, \infty, \qquad \quad &\text{if } 1 < \alpha\, \le \, 2; \\ 
\sum_{i=1}^\infty  \Big( \sum_{j=i}^\infty c_j \Big)^\rho \, < \, \infty \,  \text{ for some } 0<\rho<1, \qquad \quad &\text{if } \alpha=1. \end{cases}.  \tag{TC} \label{eq:technical_condition} \end{align}
Notice that \eqref{eq:technical_condition} is e.g. satisfied if $\sum_{k=1}^\infty k \, c_k<\infty$ in the case $1<\alpha\le 2$ or $\sum_{k=1}^\infty k \, c_k^\rho <\infty$ for some $0<\rho<1$ in the case $\alpha=1$.

\begin{prop} Let the $X^n$ be uncoupled correlated CTRWs as in \eqref{defi:CTRW} with $\alpha \in (0,2]$ and $\beta \in (0,1)$. Further, suppose that \eqref{eq:technical_condition} holds, and that the law of the innovations $\mathcal{L}(\theta_0)$ satisfies $\E[\theta_0]=0$ in case $1<\alpha \le 2$ as well as it being symmetric around zero if $\alpha=1$. If $f,f_n \in \D_{\R}[0,\infty)$ are such that $\dM(f_n,f) \to 0$ as $n\to \infty$, then we have \eqref{eq:conv_scalas} with $X=(\sum_{j=0}^\infty c_j)Z_{D^{-1}}$.
\end{prop}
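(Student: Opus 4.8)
The plan is to obtain this as a specialisation of the general stochastic-integral convergence theorem for correlated CTRWs, \cite[Thm.~4.15]{andreasfabrice}; the task is to confirm that its hypotheses hold in the present deterministic-integrand setting. The convergence \eqref{eq:conv_scalas} will take place in the M1 topology, i.e.\ on $(\D_{\R}[0,\infty),\dM)$: a genuinely correlated CTRW is only M1-convergent, and, as recalled after \eqref{eq:coupled_CTRW_J1_conv} and in \cite[Prop.~4.6]{andreasfabrice_theorypaper}, such $X^n$ do not admit good decompositions, so the route through \cite[Thm.~3.6]{andreasfabrice_theorypaper} used for Proposition~\ref{eq:fixed_integrand_J1} is unavailable. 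The ingredients to check for \cite[Thm.~4.15]{andreasfabrice} are: (i) the base functional CLT, which in the uncoupled case is exactly $X^n\Rightarrow(\sum_{j\ge 0}c_j)Z_{D^{-1}}$ in M1 from \eqref{eq:CTRW_M1_conv}; (ii) the coefficient constraint \eqref{eq:technical_condition}, assumed; (iii) the centring $\E[\theta_0]=0$ when $1<\alpha\le 2$, resp.\ symmetry of $\mathcal L(\theta_0)$ when $\alpha=1$, assumed; and (iv) admissibility of the deterministic, M1-convergent integrands --- this holds because $Z$ and $D$ are independent Lévy processes, so $(\sum_j c_j)Z_{D^{-1}}$ almost surely has no fixed discontinuities and hence never shares a jump time with the deterministic càdlàg limit $f$.

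To see the mechanism behind \cite[Thm.~4.15]{andreasfabrice} rather than merely quote it, I would start from the Avram--Taqqu-type reindexing of the correlated partial sums: with $T_m$ the two-sided partial sums of the $\theta_i$,
\[
\sum_{k=1}^N \zeta_k \;=\; \Bigl(\sum_{j\ge 0}c_j\Bigr)T_N \;-\;(R_N-R_0), \qquad R_N\;:=\;\sum_{m\ge 1}\Bigl(\sum_{j\ge m}c_j\Bigr)\theta_{N-m+1},
\]
so that $X^n=(\sum_j c_j)\widetilde X^n+\rho^n$, where $\widetilde X^n$ is the uncorrelated CTRW driven by $T_{N(n\,\cdot)}$ and $\rho^n$ is a boundary remainder whose weights are the tails $\bar c_m:=\sum_{j\ge m}c_j$. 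The first summand gives $(\sum_j c_j)\int_0^\bullet f_n\,\diff\widetilde X^n\Rightarrow(\sum_j c_j)\int_0^\bullet f\,\diff Z_{D^{-1}}$ directly from Proposition~\ref{eq:fixed_integrand_J1}, applied to the uncoupled, uncorrelated CTRW $\widetilde X^n$ with the same M1-convergent $f_n$. The crux is then $\int_0^\bullet f_n\,\diff\rho^n\Rightarrow 0$: the jumps of $\rho^n$ live on a time scale of order $1/n$ and a single innovation $\theta_{i_0}$ enters the integral with aggregate weight proportional to $\bar c_1-\sum_{\ell\ge 1}c_\ell=0$, so what survives is a second-order term controlled by the modulus of continuity of $f_n$ paired against $\sum_{\ell\ge 1}\ell\,c_\ell$ (equivalently $\sum_{m\ge 1}\bar c_m$), which \eqref{eq:technical_condition} keeps finite; the centring $\E[\theta_0]=0$ (resp.\ symmetry for $\alpha=1$, the Cauchy borderline) is precisely what stops this term from carrying a deterministic drift that the $n^{\beta/\alpha}$ scaling would blow up --- the same reason centring is needed for \eqref{eq:CTRW_M1_conv} itself when $\alpha>1$.

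The main obstacle is exactly this last step, which is the technical heart of \cite[Thm.~4.15]{andreasfabrice}. Because the innovations lie only in the \emph{normal} domain of attraction of an $\alpha$-stable law, they have moments of every order below $\alpha$ but not of order $\alpha$, so the scaled remainder $\rho^n$ does not vanish uniformly on compacts --- the extreme $\theta$'s leave $O_{\Pro}(1)$-sized oscillations --- and one cannot simply split off $\rho^n$ and recombine via a Slutsky argument, M1 addition not being continuous. The cancellation must therefore be carried out inside the integral, exploiting the regularity of $f_n$, the almost-sure non-collision of the (random) jump times of $X^n$ with the deterministic jumps of $f$, and \eqref{eq:technical_condition}. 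Granting this --- which is what \cite[Thm.~4.15]{andreasfabrice} supplies --- adding back the leading-term limit yields \eqref{eq:conv_scalas} in the M1 topology with $X=(\sum_{j\ge 0}c_j)Z_{D^{-1}}$.
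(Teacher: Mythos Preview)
Your proposal is correct and matches the paper's own treatment: the proposition is stated there precisely as a special case of \cite[Thm.~4.15]{andreasfabrice}, with no further argument given. Your write-up goes beyond the paper by verifying the hypotheses explicitly and sketching the Avram--Taqqu decomposition underlying that theorem, but the route is the same.
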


It can be shown that this also extends to random integrands adapted to \eqref{eq:filtration_CTRWs}, where however, broadly speaking, we must ensure a certain regularity of the integrands to be independent at any time of the future trajectory of the integrators, as detailed in \cite[Thm.~4.15]{andreasfabrice}.

The next result gives another example of what can be said for correlated CTRWs when we allow for more generality concerning the law of the innovations in return for additional regularity of the integrands. The result follows directly from \cite[Thm. 4.14]{andreasfabrice}.

\begin{prop} \label{prop:scalas_correlated_Lipschitz}
    Let the $X^n$ be uncoupled correlated CTRWs as in \eqref{defi:CTRW} with $\alpha \in (0,2]$ and $\beta \in (0,1)$. Moreover assume that  \eqref{eq:technical_condition} holds, $(f_n)_{n\ge 1} \subseteq C^1$ with $\dM(f_n,f) \to 0$ and such that $|f_n'|^{*}_T \le C_T \, n^\gamma $ for each $n\ge 1$ and $T>0$, constants $C_T>0$ which only depend on $T$ as well as some $\gamma \in (0,\beta/\alpha)$. Then, the convergence in \eqref{eq:conv_scalas} holds with $X=(\sum_{j=0}^\infty c_j) Z_{D^{-1}}$.
\end{prop}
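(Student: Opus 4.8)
The plan is to apply \cite[Thm.~4.14]{andreasfabrice} directly; what this requires is only to check that the present hypotheses are a special case of those of that theorem, and the bulk of this proposal is therefore a discussion of \emph{why} those hypotheses are the natural ones. On the bookkeeping side: the $X^n$ are uncoupled correlated CTRWs of the form \eqref{defi:CTRW} with $\alpha\in(0,2]$, $\beta\in(0,1)$; the coefficients $c_j\ge0$ obey the technical condition \eqref{eq:technical_condition}; and the integrands satisfy $f_n\in C^1$, $\dM(f_n,f)\to0$, together with the growth bound $|f_n'|^{*}_T\le C_T n^\gamma$ for some $\gamma\in(0,\beta/\alpha)$. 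Note that, unlike in the preceding proposition, \emph{no} centering or symmetry assumption on $\mathcal L(\theta_0)$ enters; this is the trade-off announced in the text, with the regularity of the integrands replacing those moment conditions.

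To see the mechanism, write the stochastic integral as the jump sum $\int_0^t f_n(s)\,\diff X^n_s=n^{-\beta/\alpha}\sum_{k:\,L_k\le nt}f_n(\tau_k)\,\zeta_k$, with $\tau_k:=L_k/n$ the jump times of $X^n$, and compare it with $\bar c\int_0^\bullet f_n\,\diff\bar X^n$, where $\bar c:=\sum_{j\ge0}c_j$ and $\bar X^n$ is the uncorrelated (uncoupled) CTRW built from the same $\theta_k$ and $J_k$; for the latter, Proposition \ref{eq:fixed_integrand_J1} already gives convergence to $\bar c\int_0^\bullet f\,\diff Z_{D^{-1}}$. Now a large innovation $\theta_m$ makes $X^n$ perform, over the short window $[\tau_m,\tau_{m+\ell}]$, a staircase of increments $n^{-\beta/\alpha}f_n(\tau_{m+j})c_j\theta_m$ (the $\zeta$'s in the window being dominated by $\theta_m$), whereas $\bar c\,\bar X^n$ makes the single increment $n^{-\beta/\alpha}\bar c f_n(\tau_m)\theta_m$ at $\tau_m$. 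Because $c_j\ge0$ and $f_n$ is continuous with a fixed sign across the shrinking window, this staircase is monotone of the same sign as the single increment, and since the windows become instantaneous the M1 topology does not distinguish the two (this smearing is precisely what M1 tolerates and J1 does not). The only genuine discrepancy is the height mismatch $n^{-\beta/\alpha}\theta_m\sum_{j\ge1}c_j\bigl(f_n(\tau_{m+j})-f_n(\tau_m)\bigr)$, plus boundary terms carrying the tail sums $\sum_{j\ge i}c_j$; the derivative bound turns the former into something of size $\lesssim n^{-\beta/\alpha}|\theta_m|\,C_T n^\gamma\,(\tau_{m+\ell}-\tau_m)$, condition \eqref{eq:technical_condition} keeps the boundary terms summable, and assembling over all windows leaves a remainder of order $C_T n^{\gamma-\beta/\alpha}$ times a factor that is negligible because $\beta<1$. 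Thus $\dM\bigl(\int_0^\bullet f_n\,\diff X^n,\ \bar c\int_0^\bullet f_n\,\diff\bar X^n\bigr)\to0$ in probability, and combining this with Proposition \ref{eq:fixed_integrand_J1} through the triangle inequality for $\dM$ yields \eqref{eq:conv_scalas} with $X=\bar c\,Z_{D^{-1}}$, in the M1 topology.

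The main obstacle --- and the reason this is pushed into \cite[Thm.~4.14]{andreasfabrice} rather than deduced from the good-decomposition machinery of \cite{andreasfabrice_theorypaper} (correlated CTRWs having no good decomposition, cf.\ \cite[Prop.~4.6]{andreasfabrice_theorypaper}) --- is to make the ``M1-negligibility of the smearing'' rigorous \emph{uniformly over the $\approx n^\beta$ windows at once}, including the rare events on which a large waiting time falls inside a window, so that the window is not asymptotically instantaneous. One must exploit, simultaneously, the independence of the $\theta_k$, the summability afforded by \eqref{eq:technical_condition}, and the budget $\gamma<\beta/\alpha$ together with $\beta<1$; the argument cannot be reduced to one crude supremum estimate, since the discrepancy process itself has excursions of order $O_\Pro(1)$ and is not uniformly small. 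A secondary point is that M1 is not a linear topology, so one cannot simply ``add a small M1 perturbation'': it is the monotone-staircase-versus-single-jump comparison above, and not a Slutsky shortcut, that makes the proof go through.
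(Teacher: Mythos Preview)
Your proposal is correct and matches the paper's own treatment: the paper simply states that the result follows directly from \cite[Thm.~4.14]{andreasfabrice}, which is exactly what you invoke. Your additional heuristic discussion of the mechanism (the M1-negligible smearing of the staircase, the role of \eqref{eq:technical_condition} and the budget $\gamma<\beta/\alpha$) is not in the paper but is a faithful sketch of why that theorem applies here.
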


In fact, it suffices that the $f_n$ are Lipschitz on compacts $[0,T]$ with Lipschitz constant $C_T n^\gamma$. Also---provided AVCI is satisfied for the integrands and integrators (see \cite[Def.~3.2 \& Prop.~3.8]{andreasfabrice_theorypaper})---there is no need to restrict to deterministic functions as long as they almost surely exhibit this Lipschitz property, are adapted to \eqref{eq:filtration_CTRWs} and converge weakly to some càdlàg limit.

\section{Cointegrated processes in econometrics}\label{subsect:econometrics}

Exactly as Paulaskas \& Rachev \cite{paulaskas_rachev}, we shall focus on the general formulation of multivariate cointegrated time series from the classical work of Park \& Phillips \cite{park_phillips}. Given a $k$-dimensional time series $(Y_{t_i})_{i= 1}^n$ and an $m$-dimensional time series $(X_{t_i})_{i= 1}^n$, for some time interval $[0,t]$, we are interested in understanding if we have the structure
\[
 X^n_{t_i} = X^n_{t_{i-1}}  + v_{t_i} \quad \text{and} \quad Y^n_{t_i} = \mathbb{M}^n X^n_{t_i} + u_{t_i} \quad  \text{for}\quad {t_1,\ldots, t_n}\in[0,t],
\]
for some $k\times m$ matrix $\mathbb{M}^n$ and suitable innovations $(u_{t_i})_{i=1}^n$ and $(v_{t_i})_{i=1}^n$. As in \cite[Sect.~3]{park_phillips}, one can then show that the least square estimator $\hat{\mathbb{M}}^n$ of $\mathbb{M}^n$ is of the form
\begin{equation*} \hat{\mathbb{M}}^n = \mathbb{Y}^\intercal \mathbb{X} ( \mathbb{X}^\intercal  \mathbb{X})^{-1}, \quad \text{with} \quad \hat{\mathbb{M}}^n - \mathbb{M}^n= \mathbb{U}^\intercal \mathbb{X} ( \mathbb{X}^\intercal  \mathbb{X})^{-1},
\end{equation*}
where $\mathbb{X} $, $\mathbb{Y}$, and $ \mathbb{U}$ are appropriate matrices collecting the two time series and the first innovation sequence (dependence on $t$ and $n$ has been suppressed). To formulate hypothesis tests, the idea is to look for an invariance principle for $\hat{\mathbb{M}}^n - \mathbb{M}^n $ as the sample size $n$ tends to infinity, meaning that we pass to observations in continuous time in $[0,t]$. Naturally, this will rely on first assuming some invariance principle for the innovations $(u_{t_i})_{i=1}^n$ and $(v_{t_i})_{i=1}^n$, and it will involve suitable scaling factors depending on the assumption on the laws of these innovations.

While \cite{park_phillips} considers Gaussian invariance principles, the objective of \cite{paulaskas_rachev} (and the related works \cite{caner2,caner1}) is to allow for heavy-tailed innovations with infinite variance. Based on stable limit theorems, \cite{paulaskas_rachev} thus consider the re-scaled differences
\begin{equation}\label{eq:estimator_error} n \mathbb{T} ( \hat{\mathbb{M}}^n - \mathbb{M}^n ) \tilde{\mathbb{T}}^{-1} = 
	\bigl(\mathbb{T} \mathbb{U}^\intercal \mathbb{X} \tilde{\mathbb{T}} \bigr) \bigl(\tfrac{1}{n} \tilde{\mathbb{T}} \mathbb{X}^\intercal \mathbb{X} \tilde{\mathbb{T}}   \bigr)^{-1},
\end{equation}
where
\[
\mathbb{T} = \textrm{diag}(\tfrac{1}{n^{1/\alpha_1}},\ldots, \tfrac{1}{n^{1/\alpha_p}})\quad \text{and} \quad \tilde{\mathbb{T}} = \textrm{diag}(\tfrac{1}{n^{1/\tilde{\alpha}_1}},\ldots, \tfrac{1}{n^{1/\tilde{\alpha}_q}}),
\]
for suitable parameters $1 < \alpha_i,\tilde{\alpha}_j \leq 2$, $i=1,\ldots,p$, $j=1,\ldots,q$. Rewriting $\mathbb{T} \mathbb{U}^\intercal \mathbb{X} \tilde{\mathbb{T}}$ in \eqref{eq:estimator_error}, it turns out that the main challenge towards an invariance principle for \eqref{eq:estimator_error} is the joint weak convergence of the stochastic integrals $\int_0^\bullet Z^{n,i}_{s-} \diff \tilde{Z}^{n,j}_{s}$, for $i=1,\ldots p$ and $j=1,\ldots, q$, where
\begin{equation}\label{eq:random_walks}
	Z^{n,i}_s := \sum_{\ell=1}^{\lfloor ns/t \rfloor} \frac{1}{n^{1/\alpha_i}} u^{(i)}_{t_\ell}\quad \text{and} \quad \tilde{Z}^{n,j}_s := \sum_{\ell=1}^{\lfloor ns/t \rfloor} \frac{1}{n^{1/\tilde{\alpha}_j}} v^{(j)}_{t_\ell},\quad \text{for}\quad s\in [ 0,t].
\end{equation}

In \cite{paulaskas_rachev}, the paired innovations $(u_{t_\ell},v_{t_\ell})_{\ell=1}^n$ are assumed to form an i.i.d.~sequence of $(p+q)$-dimensional random variables in the normal domain of attraction of a stable law on $\mathbb{R}^{p+q}$ with parameters $(\alpha,\tilde{\alpha})=(\alpha_1,\ldots,\alpha_p,\tilde{\alpha}_1,\ldots, \tilde{\alpha}_q)$. In this setting, the processes \eqref{eq:random_walks} converge weakly on $(\mathbf{D}_{\mathbb{R}^{p+q}}[0,t],\mathrm{d}_{\text{J1}})$ to a pair of $(\alpha,\tilde{\alpha})$-stable L{\'e}vy processes $(Z,\tilde{Z})$ and the desired stochastic integral convergence becomes a direct consequence of the general framework in \cite{andreasfabrice_theorypaper}.

\begin{prop}\label{prop:econometrics_1}
    Let $Z^{n,i}, \tilde{Z}^{n,j}$ be as in \eqref{eq:random_walks} with the above assumptions on the innovations satisfied for some $\alpha_i,\tilde{\alpha}_j\in(0,2]$, $i=1,\ldots,p$, $j=1,\ldots,q$. Then there is weak convergence
    $$\Big( Z^n, \tilde{Z}^n, \int_0^\bullet \, Z^{n}_{s-} \; \otimes \diff \tilde{Z}^{n}_{s}\Big) \; \Rightarrow \; \Big( Z, \tilde{Z}, \int_0^\bullet \, Z_{s-} \; \otimes \diff \tilde{Z}_{s}\Big) \qquad \text{on } \; \; (\mathbf{D}_{\mathbb{R}^{p+q+pq}}[0,t],\, \dJ)$$
    where the outer product $\int_0^\bullet  Z^{n}_{s-}  \otimes \diff \tilde{Z}^{n}_{s}$ denotes the matrix $(\int_0^\bullet  Z^{n,i}_{s-}\diff \tilde{Z}^{n,j}_{s})_{i,j}$.
\end{prop}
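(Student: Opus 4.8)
The plan is to derive the result as a direct application of the stochastic integral convergence theorem \cite[Thm.~3.6]{andreasfabrice_theorypaper}, now with random adapted integrands, mirroring the proof of Proposition~\ref{eq:fixed_integrand_J1}. The joint J1 convergence $(Z^n,\tilde Z^n)\Rightarrow(Z,\tilde Z)$ on $(\D_{\R^{p+q}}[0,t],\dJ)$ to a pure-jump $(\alpha,\tilde\alpha)$-stable Lévy process has already been recorded above, being the multivariate functional stable limit theorem for the rescaled i.i.d.\ partial sums \eqref{eq:random_walks}. Working on the filtered spaces with $\F^n$ as in \eqref{eq:filtration_CTRWs} (generated coordinatewise by the innovation sequences), the integrand process $Z^n$ is adapted and its left-limited version $Z^n_{\cdot-}$ is caglad, so it is admissible as an integrand; the integrator $\tilde Z^n$ is, in each coordinate, an uncorrelated, uncoupled CTRW and hence admits a good decomposition by \cite[Thm.~3.25]{andreasfabrice_theorypaper}, and since all $q$ coordinates are driven by the same rescaled innovation sequence, the $\R^q$-valued $\tilde Z^n$ inherits a good decomposition as well.

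It then remains to check AVCI for the pair (integrands $Z^n$, integrators $\tilde Z^n$), which I would do via \cite[Prop.~3.8]{andreasfabrice_theorypaper}: the limiting integrator $\tilde Z$ is a Lévy process and therefore a.s.\ has no jump at any fixed deterministic time, and the same holds for $Z$; this is exactly the criterion needed, just as in Proposition~\ref{eq:fixed_integrand_J1}. With the good decomposition and AVCI in hand, \cite[Thm.~3.6]{andreasfabrice_theorypaper}, applied to the $\R^p$-valued integrand $Z^n$ against the $\R^q$-valued integrator $\tilde Z^n$ with the outer-product pairing, yields the joint convergence $(Z^n,\tilde Z^n,\int_0^\bullet Z^n_{s-}\otimes\diff\tilde Z^n_s)\Rightarrow(Z,\tilde Z,\int_0^\bullet Z_{s-}\otimes\diff\tilde Z_s)$ on $(\D_{\R^{p+q+pq}}[0,t],\dJ)$, the matrix-valued integral being, entrywise, the collection of scalar integrals $\int_0^\bullet Z^{n,i}_{s-}\diff\tilde Z^{n,j}_s$.

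The point requiring care is the presence of common jumps: since $(u,v)$ arises from a single stable law on $\R^{p+q}$, the coordinate walks $Z^{n,i}$ and $\tilde Z^{n,j}$ jump simultaneously and the limit processes $Z$ and $\tilde Z$ share jump times, which is ordinarily fatal for J1 convergence of products or stochastic integrals. Here it is benign precisely because we integrate the left-limited (predictable) integrand $Z^{n,i}_{s-}$: the prelimit integral is the discrete sum $\sum_{\ell} Z^{n,i}_{t_{\ell-1}}(\tilde Z^{n,j}_{t_\ell}-\tilde Z^{n,j}_{t_{\ell-1}})$, which carries no diagonal self-interaction at a jump, and the limit is the corresponding Itô integral, on which the good-decomposition and AVCI machinery of \cite{andreasfabrice_theorypaper} delivers J1-continuity jointly in integrand, integrator, and integral. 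The main obstacle is thus to confirm that AVCI genuinely holds in this vector-valued, common-jump setting — which, via \cite[Prop.~3.8]{andreasfabrice_theorypaper}, reduces to the fact that a stable Lévy process does not jump at deterministic times — together with the minor check that the good decomposition of $\tilde Z^n$ is uniform across its $q$ coordinates, which is immediate since one rescaled i.i.d.\ sum drives them all.
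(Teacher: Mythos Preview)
Your approach is essentially the paper's: joint J1 convergence of $(Z^n,\tilde Z^n)$, good decompositions for $\tilde Z^n$ via \cite[Thm.~3.25]{andreasfabrice_theorypaper}, AVCI via \cite[Prop.~3.8]{andreasfabrice_theorypaper}, and the conclusion via \cite[Thm.~3.6]{andreasfabrice_theorypaper} (the paper additionally cites \cite[Rem.~3.11]{andreasfabrice_theorypaper} to handle the outer-product/matrix-valued form).

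The one place your reasoning slips is the AVCI justification. You argue that it ``reduces to the fact that a stable L\'evy process does not jump at deterministic times,'' explicitly mirroring Proposition~\ref{eq:fixed_integrand_J1}. That argument works in Proposition~\ref{eq:fixed_integrand_J1} because the integrands there are \emph{deterministic} functions $f_n$. Here the integrands $Z^n$ are random and, as you yourself stress, $Z$ and $\tilde Z$ share jump times; so neither the deterministic-integrand reasoning nor the ``no common discontinuities'' criterion \cite[Prop.~3.8(2)]{andreasfabrice_theorypaper} is available. The paper instead invokes \cite[Prop.~3.8(1)]{andreasfabrice_theorypaper}, which yields AVCI directly from the joint J1 convergence $(Z^n,\tilde Z^n)\Rightarrow(Z,\tilde Z)$ on $(\D_{\R^{p+q}}[0,t],\dJ)$---an ingredient you have already recorded. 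So your proof structure is right, but the AVCI step should be routed through part~(1) of Prop.~3.8 (convergence together in J1), not through the absence of jumps at fixed times.
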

\begin{proof}
    Due to the assumptions, the $(Z^n,\tilde{Z}^n)$ are CTRWs converging together to $(Z,\tilde{Z})$ on $(\mathbf{D}_{\mathbb{R}^{p+q}}[0,t],\dJ)$ by the results mentioned in Section \ref{subsect:CTRW}. In particular, they satisfy the AVCI condition from \cite[p.8]{andreasfabrice_theorypaper} in view of \cite[Prop.~3.8(1)]{andreasfabrice_theorypaper}. Since the sequences $(\tilde{Z}^{n,j})_{n\geq 1}$ admit good decompositions by \cite[Thm.~3.25]{andreasfabrice_theorypaper}, the result follows from \cite[Thm.~3.6 \& Rem.~3.11]{andreasfabrice_theorypaper}.
\end{proof}

We note that the arguments in \cite{paulaskas_rachev} only apply to $\alpha_i,\tilde{\alpha}_j \in (1,2]$. The main technical work is concerned with verifying the UCV condition of \cite{kurtzprotter} in order to apply \cite[Thm.~2.7]{kurtzprotter} (see Remark \ref{rem_UCV} below). Whilst not explicit, a close inspection of the proof of \cite[Prop.~3]{paulaskas_rachev} reveals that these arguments boil down to a bound on $\mathbb{E}^n[\operatorname{sup}_{s\in[0,t]}|\Delta\tilde{Z}^{n}_s|]$ uniformly in $n\geq 1$ (something that is specific to the range $\alpha_i,\tilde{\alpha}_j \in (1,2]$\,). As the 
$\tilde{Z}^{n}$ can be seen to be martingales in this range, they immediately satisfy the martingale condition for good decompositions in \cite[Def.~3.3]{andreasfabrice_theorypaper}, and hence we have convergence of the stochastic integrals without any additional work.

\begin{remark}\label{rem_UCV}The arguments in \cite{paulaskas_rachev} rely on \cite[Thm.~2.7]{kurtzprotter} by finding decompositions $\tilde{Z}^n=M^n+A^n+J^n$ for which they can (i) verify the UCV condition for $M^n$ and $A^n$, and (ii) show that the $Z^n$, $\tilde{Z}^n$, and $J^n$ converge together in J1, where the $J^n$ are constant except for finitely many jumps in $[0,t]$, for each $t>0$, and have a number of jumps in $[0,t]$ that is tight, for each $t>0$. It is worth highlighting here that the convergence requirement on the $J^n$ only appears because of the particular method of proof in \cite{kurtzprotter}. Since $\tilde{Z}^n$ is J1 tight with $M^n$ and $A^n$ satisfying the UCV property, one automatically gets tightness of the jump sizes of $J^n$, so (i) and (ii) give that the $J^n$ are of tight total variation, and hence they can immediately be put into the finite variation part of a good decomposition, regardless of any convergence issues.
\end{remark}

Results similar to \eqref{prop:econometrics_1} are central to a wide-ranging literature on cointegration tests, see e.g.~the recent book \cite{Wang_book}. Directly related to the above, the more general aymptotic analysis of heavy-tailed near-integrated time series with autoregression in \cite{chan_zhang} also relies on the arguments of \cite{paulaskas_rachev} through the proof of the key \cite[Lem.~A.4]{chan_zhang}. It should be noted that marginal convergence is often sufficient, but functional convergence can be of independent interest and we refer to ~\cite{davis, davis2} for cases where functional convergence plays an essential role.

Finally, we wish to highlight how the framework of \cite{andreasfabrice_theorypaper} can also shed light on cases where the innovations are given by linear processes (violating the assumptions of \cite{paulaskas_rachev}) and where a nonlinear function is involved. Consider, for example, stochastic integrals $\int_0^\bullet f(Z^{n}_{s-}) \diff \tilde{Z}^{n}_{s}$, for a continuous function $f$, where $Z^{n}$ and $\tilde{Z}^{n}$ are as in \eqref{eq:random_walks}. If the innovations $u$ and $v$ have the form of $\zeta$ from \eqref{defi:CTRW}, then $Z^{n}$ and $\tilde{Z}^{n}$ become strictly M1 convergent moving averages. It is straightforward to show that $(f(Z^n),\tilde{Z}^n)$ satisfy the weaker convergence assumptions outlined in \cite[Prop.~3.22]{andreasfabrice_theorypaper}. Depending on the application, it can be natural for the innovations $(u_{t_\ell})_{\ell \geq 1}$ and $(v_{t_\ell})_{\ell \geq 1}$ to model two independent sources of randomness: in such a case, we obtain the following functional convergence result. For simplicity of notation, we take $i=j=1$.
\begin{prop}
    Let $Z^{n}, \tilde{Z}^{n}$ be defined as in \eqref{eq:random_walks} and let the innovations be of the form
    $$u_{t_\ell}\, = \, \sum_{k=0}^\infty \, c_{k} \, \theta_{\ell-k} \qquad \text{ and } \qquad v_{t_\ell}\, = \, \sum_{k=0}^\infty \, \tilde c_{k} \, \tilde\theta_{\ell-k}$$ 
    where the $\theta_j$, $\tilde \theta_k$ are i.i.d.~random variables in the normal domain of attraction of an $\alpha$-- and $\tilde{\alpha}$--stable law respectively, the sequences $(c_{k})_{k\ge 1}, (\tilde c_{k})_{k\ge 1}$ satisfying the conditions given after \eqref{defi:CTRW} as well as \eqref{eq:technical_condition}, and the sequences $(\theta_k)_{k\ge 1}$ and $(\tilde \theta_k)_{k\ge 1}$ are independent. Let $f:\mathbb{R}\rightarrow\mathbb{R}$ be continuous. Assume $\E[\theta_0]=0$ if $\tilde{\alpha} \in(1,2]$ or that the law of $\theta_0$ is symmetric around zero if $\tilde{\alpha}=1$. Then, $\tilde{Z}$ is a semimartingale with respect to the filtration generated by the pair $(Z,\tilde Z)$, and there is weak convergence
    $$\Big( \tilde{Z}^n, \int_0^\bullet \, f(Z^{n}_{s-}) \; \diff \tilde{Z}^{n}_{s}\Big) \; \Rightarrow \; \Big( \tilde c \tilde{Z},\,  \tilde c \int_0^\bullet \, f(c  Z_{s-}) \;  \diff \tilde{Z}_{s}\Big) \quad \text{on} \quad (\mathbf{D}_{\mathbb{R}^{2}}[0,t],\, \dM)$$
    with $c:=\sum_{k=0}^\infty c_{k}$ and $\tilde c:=\sum_{k=0}^\infty \tilde c_{k}$.
\end{prop}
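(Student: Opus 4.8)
\emph{Proof strategy.} The plan is to apply \cite[Prop.~3.22]{andreasfabrice_theorypaper} to the integrand process $H^n_s:=f(Z^n_{s-})$ and the integrator $\tilde Z^n$, both adapted to the natural filtration $\mathcal F^n_t:=\sigma(Z^n_s,\tilde Z^n_s:s\le t)$ of the pair (so $H^n$ is left-continuous and adapted, hence predictable). I would need to supply three ingredients: (a) a good decomposition and the AVCI condition for the integrator sequence $\tilde Z^n$; (b) the deliberately weak form of joint convergence of $(H^n,\tilde Z^n)$ to $(f(cZ),\tilde c\,\tilde Z)$ that \cite[Prop.~3.22]{andreasfabrice_theorypaper} asks for; and (c) the semimartingale property of $\tilde Z$ for the filtration generated by $(Z,\tilde Z)$, which is what makes the limiting integral well defined.

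For (a): $\tilde Z^n$ is an uncoupled correlated CTRW \eqref{defi:CTRW} with $L_k=k$ — i.e.\ a moving average — driven by the innovations $\tilde\theta$, with $(\tilde c_k)_{k\ge1}$ satisfying the conditions stated after \eqref{defi:CTRW} and \eqref{eq:technical_condition}, and with the innovation law subject to the centring/symmetry assumption of the proposition (so that $\tilde Z^n$ converges and admits a good decomposition). Exactly as in the analysis behind Proposition~\ref{prop:scalas_correlated_Lipschitz} and \cite[Thm.~4.15]{andreasfabrice}, such a moving average admits a good decomposition, and it satisfies AVCI by \cite[Prop.~3.8]{andreasfabrice_theorypaper} since its scaling limit $\tilde c\,\tilde Z$, being an $\tilde\alpha$-stable L\'evy process, almost surely has no jump at any fixed time. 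The marginal M1 convergences $\tilde Z^n\Rightarrow\tilde c\,\tilde Z$ and $Z^n\Rightarrow c\,Z$ on $(\D_{\R}[0,t],\dM)$ are the Avram--Taqqu theorem \cite{avram} (see also \cite[Thm.~4.7.1]{whitt}).

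For (b): the structural observation I would exploit is that, since $(\theta_k)$ and $(\tilde\theta_k)$ are independent, the processes $Z^n$ and $\tilde Z^n$ are independent for every $n$, and so are the limits $Z$ and $\tilde Z$; hence the integrand is globally independent of the integrator, which makes the non-anticipation of the integrand relative to the future of the integrator — discussed after Proposition~\ref{prop:scalas_correlated_Lipschitz} — automatic. By independence and the two marginal M1 convergences, $(Z^n,\tilde Z^n)$ converges jointly to $(c\,Z,\tilde c\,\tilde Z)$; passing to an almost sure representation gives $Z^n_s\to c\,Z_s$ at every continuity point $s$ of $c\,Z$, whence, by continuity of $f$ and the local boundedness of $Z^n$, one obtains $f(Z^n_{s-})\to f(c\,Z_{s-})$ at all such points with the family locally uniformly bounded in $n$. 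This is precisely the integrand convergence required by \cite[Prop.~3.22]{andreasfabrice_theorypaper}, and it cannot be upgraded to M1 (let alone J1) convergence of $f(Z^n)$ itself: over the vanishing time windows on which $Z^n$ climbs monotonically across a jump of $c\,Z$, the continuous function $f$ need not be monotone, so $f(Z^n)$ oscillates and its completed graph does not converge to that of $f(c\,Z)$. What nevertheless makes the weaker hypotheses of \cite[Prop.~3.22]{andreasfabrice_theorypaper} (rather than those of \cite[Thm.~3.6]{andreasfabrice_theorypaper}) applicable here is again the independence in (b): almost surely the ``large'' increments of $\tilde Z^n$ fall away from those windows, so on each window $\tilde Z^n$ contributes only increments of asymptotically vanishing total variation, and the oscillations of $f(Z^n)$ integrate to nothing in the limit.

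For (c): $\tilde Z$ is an $\tilde\alpha$-stable L\'evy process, hence a semimartingale for its own filtration; since $Z\indep\tilde Z$, passing to $\sigma(Z_s,\tilde Z_s:s\le t)$ is an independent enlargement, under which $\tilde Z$ retains the independent-increments property and remains a (L\'evy) semimartingale, while $f(c\,Z_{\cdot-})$ is left-continuous and adapted, hence predictable. With (a)--(c) in hand, \cite[Prop.~3.22]{andreasfabrice_theorypaper} yields
\[
\Big(\tilde Z^n,\ \int_0^\bullet f(Z^n_{s-})\,\diff\tilde Z^n_s\Big)\ \Rightarrow\ \Big(\tilde c\,\tilde Z,\ \int_0^\bullet f(c\,Z_{s-})\,\diff(\tilde c\,\tilde Z_s)\Big)\qquad\text{on }\ (\D_{\R^2}[0,t],\dM),
\]
and $\int_0^\bullet f(c\,Z_{s-})\,\diff(\tilde c\,\tilde Z_s)=\tilde c\int_0^\bullet f(c\,Z_{s-})\,\diff\tilde Z_s$ by linearity, which is the assertion. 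The hard part is step (b): pinning down the precise sense — neither M1 nor J1 — in which $f(Z^n)$ converges, and verifying that this weak convergence, together with the global independence of integrand and integrator, indeed meets the hypotheses of \cite[Prop.~3.22]{andreasfabrice_theorypaper}.
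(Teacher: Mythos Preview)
Your proposal has a genuine gap in step (a). You assert that the correlated moving average $\tilde Z^n$ ``admits a good decomposition'' and attribute this to the analysis behind Proposition~\ref{prop:scalas_correlated_Lipschitz} and \cite[Thm.~4.15]{andreasfabrice}. This is not correct: as the paper itself stresses (see the paragraph preceding Proposition~\ref{prop:scalas_correlated_Lipschitz} and again in Section~\ref{subsect:insurance}), correlated CTRWs --- and in particular correlated moving averages with at least two non-zero coefficients $\tilde c_k$ --- generally do \emph{not} admit good decompositions for $1<\tilde\alpha<2$; this is the content of \cite[Prop.~4.6]{andreasfabrice_theorypaper}. The centring/symmetry assumption and condition \eqref{eq:technical_condition} do not restore good decompositions; they are the hypotheses under which the extensions in \cite{andreasfabrice} (Thms.~4.14, 4.15, Rem.~4.16) manage to \emph{bypass} the need for them. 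Since \cite[Prop.~3.22]{andreasfabrice_theorypaper} itself requires the integrators to have good decompositions, it cannot be invoked directly as you propose.

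The paper's proof agrees with your steps (b) and (c) in spirit: it uses the independence of $(\theta_k)$ and $(\tilde\theta_k)$ to obtain joint convergence of $(Z^n,\tilde Z^n)$ on the product M1 space and to check that $(f(Z^n),\tilde Z^n)$ satisfies the weak convergence conditions F1--F3 of \cite[Prop.~3.22]{andreasfabrice_theorypaper}. But it then concludes via \cite[Rem.~4.16]{andreasfabrice} rather than via \cite[Prop.~3.22]{andreasfabrice_theorypaper} itself. That remark is precisely the extension covering correlated CTRW integrators (under \eqref{eq:technical_condition} and the centring/symmetry hypothesis) with integrands independent of the integrator's innovations, without requiring good decompositions. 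So your structural observation about independence is the right idea, and your discussion of why $f(Z^n)$ fails M1 convergence is apt; the missing piece is that you must route the final step through \cite[Rem.~4.16]{andreasfabrice} rather than through \cite[Prop.~3.22]{andreasfabrice_theorypaper} alone.
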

\begin{proof}
    The weak convergence of the $(Z^n, \tilde{Z}^n)$ on $(\mathbf{D}_{\mathbb{R}}[0,t],\, \dM)\times (\mathbf{D}_{\mathbb{R}}[0,t],\, \dM)$ follows from \eqref{eq:CTRW_M1_conv}, and it is readily verified that the $(f(Z^n),\tilde{Z}^n)$ satisfy the conditions F1-F3 of \cite[Prop.~3.22]{andreasfabrice_theorypaper}. Therefore, the claim follows directly from \cite[Rem.~4.16]{andreasfabrice}.
\end{proof}

\section{Stability of the financial gain in mathematical finance}\label{sect:asset_prices}

Famously, already in the 60's, Mandelbrot \cite{mandlebrot} and Fama \cite{fama} advocated for $\alpha$-stable distributions in finance. In particular, \cite{mandlebrot} illustrated how $\alpha =1.7$, rather than $\alpha=2$, gave a much better correspondence with observed time series for cotton prices. Inspired by such observations and driven by a phenomenological approach to market microstructure with random inter-trade durations, there has been a growing interest in CTRW models of asset prices, first formalized by Scalas, Gorenflo and Mainardi in \cite{scalas_finance} (building on the classical works \cite{clark,parkinson,press}).

Following on from \cite{scalas_finance}, we consider a sequence of asset prices $S^n_t$ that are determined, in a `tick-by-tick' fashion, by random price moves occurring only at unevenly-spaced random times $(t_i)_{i\geq 0}$. For each $n\geq 1$, we work with waiting times $t_i-t_{i-1}=n^{-1}J_i$ and log-price innovations $\ln(S^n_{t_i})-\ln(S^n_{t_{i-1}})=n^{-\beta/ \alpha}\zeta_i+n^{-1}J_i\,r$, where $r$ is the risk free rate which is reflected as an upward trend in the price innovations (investors should expect to be compensated for the risk free return that they could have earned by holding the risk free asset). By summing up these tick-by-tick price moves, we obtain the exponential CTRWs
\begin{equation}\label{eq:CTRW_asset_price}
S^n_t=e^{R^n_t},\quad \text{where}\quad  R^n_t := \sum_{i=1}^{N(nt)} \bigl(n^{-\beta/ \alpha}\zeta_i+ n^{-1}J_i\,r\bigr),
\end{equation}
with $N(t)$ as in \eqref{eq:CTRW_N}. Here, the components $\zeta_i$ of the log-price innovations are assumed to be i.i.d.~in the normal domain of attraction of an $\alpha$-stable law, for $\alpha \in [1,2]$, and the waiting times $J_i$ are assumed to be i.i.d.~in the normal domain of attraction of a $\beta$-stable law with $\beta \in (0,1)$, as in the standard literature on CTRW models for tick-by-tick prices. Note that, for all $t\ge 0$,
\begin{equation}\label{eq:sums_risk-free} rt \, - \, rJ_{N(nt)}/n \; < \; \sum_{i=1}^{N(nt)}n^{-1}J_i\,r \; \le \; rt,
\end{equation}
and $J_{N(nt)}/n \Rightarrow 0$, so we readily obtain that the increasing sums in \eqref{eq:sums_risk-free} are weakly M1 convergent to the continuous limit $t\mapsto rt$ (hence weakly uniformly convergent). Writing $\tilde{R}^n_t$ for the sum of the $n^{-\beta/ \alpha}\zeta_i$ terms in \eqref{eq:CTRW_asset_price}, we know from Section \ref{subsect:CTRW} that the CTRWs $\tilde{R}^n$ converge weakly in J1 to $\tilde{R}:=Z_{D^{-1}}$ or $\tilde{R}:=((Z^-)_{D^{-1}})^+$, for an $\alpha$-stable L\'evy process $Z$ and $\beta$-stable subordinator $D$, depending on whether they are coupled or uncoupled. As one limit is continuous, we conclude that $R^n \Rightarrow \tilde{R}_\bullet+r\bullet$ in J1 and then also $S^n \Rightarrow S:=e^{\tilde{R}_\bullet+r\bullet}$ in J1.

For the limiting price $S$, the $\beta$-stable subordinator $D$ with $\beta\in(0,1)$ allows one to capture burst of activity in between periods of relative dormancy, and it yields price paths that are locally constant at almost all times in line with the intended tick-by-tick interpretation of price moves. For option pricing, one needs $\tilde{R}_t$ to have an exponential moment so that the discounted asset price $e^{-rt}X_t=e^{\tilde{R}_t}$ can be a martingale. This can be realised by `tempering' the probability of large innovations, leading to the notion of tempered stable L\'evy processes. To this end, \cite{Chakrabarty_Meerschaert} has shown that one can take an i.i.d.~sequence of innovations in the domain of attraction of a stable law, as in \eqref{eq:CTRW_asset_price}, and truncate these jumps in a certain random way (mimicking the tempering function for the desired limit) such that one obtains a continuous-time random walk for which a suitable (deterministic) scaling yields convergence to the desired tempered stable L\'evy process. Combining the triangular scheme of \cite{Chakrabarty_Meerschaert} with the results in \cite{henry_straka,jurlewicz}, this also works with random waiting times resulting in a subordinated limit (see in particular \cite[Rem.~3.5]{jurlewicz}). 

Jacquier \& Torricelli \cite{jack} have developed a remarkably tractable option pricing theory for two such limiting  models, motivated by the CTRW formalism (with appropriate tempering) and capitalising on analytical results of \cite{Becker-Kern, jurlewicz,leonenko,meerschaert_straka,meerschaert}. The first is the \emph{subdiffusive L\'evy} model, corresponding to $\tilde{R}=Z_{D^{-1}}$, where $Z$ and $D$ are independent (many earlier works are special cases of this). The second is the \emph{dependent returns and trade duration} model, corresponding to $\tilde{R}=((Z^-)_{D^{-1}})^+$, where $Z$ and $D$ are dependent but enjoy a particular structure $Z=\tilde{Z}_D$ for a L\'evy process $\tilde{Z}$ independent of $D$. The point of the latter is that one then has $\tilde{R}=\tilde{Z}_G$ up to indistinguishability with $G:=g^+$, for the last-passage time $g(t):=\text{sup} \{ s < t : s\in \{D_r:r\geq 0\} \}$, where the new (right-continuous, increasing) time-change $G$ is now independent of the parent $\tilde{Z}$ and one can exploit powerful analytical results for the law of $g$. In \cite{jack}, these models are shown to produce volatility surfaces that capture important phenomena observed in the market much better than classical L\'evy or stochastic volatility models. As in \cite[Ex.~5.4]{jurlewicz}, one can obtain $\tilde{R}=\tilde{Z}_G$ as the limit directly, by using a suitable triangular array scheme.

\subsection{On J1 stability of the financial gains}

Similarly to the analysis of Duffie \& Protter \cite{duffie_protter} for the Black--Scholes model, it is important to understand the stability properties of financial gain processes $\int_0^\bullet H^n_{t-}\diff S^n_t$, for trading strategies $H^n_{-}$, as the asset prices $S^n$ converge to their scaling limit. With $S^n$ as in \eqref{eq:CTRW_asset_price}, so that $S^n\Rightarrow S =e^{\tilde{R}_\bullet+r\bullet}$, we obtain the following stability result for the financial gains.
\begin{prop}\label{prop:fin_stability_J1}
    Let $H^n$ be càdlàg processes adapted to the filtrations \eqref{eq:filtration_CTRWs} such that $(H^n,S^n) \Rightarrow (H, S)$ on $(\D_{\R^{2d}}[0,\infty) , \dJ)$. Then, $S=e^{\tilde{R}_\bullet+r\bullet}$ is a semimartingale with respect to the filtration generated by the pair $(H,S)$ and it holds that
    $$\Big(H^n, \, S^n, \, \int_0^\bullet \, H^n_{s-} \; \diff S^{n}_{s}\Big) \; \Rightarrow \; \Big( H,\,  S,\,  \int_0^\bullet \, H_{s-} \;  \diff S_s \Big) \qquad \text{on } \; \; (\mathbf{D}_{\mathbb{R}^{3d}}[0,\infty),\, \dJ).$$
\end{prop}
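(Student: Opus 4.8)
The plan is to deduce the statement from the general stochastic integral convergence theorem \cite[Thm.~3.6]{andreasfabrice_theorypaper} (together with \cite[Rem.~3.11]{andreasfabrice_theorypaper} for the joint weak convergence of integrands, integrators and integrals), exactly as in the proofs of Proposition~\ref{eq:fixed_integrand_J1} and Proposition~\ref{prop:econometrics_1}. Since $(H^n,S^n)\Rightarrow(H,S)$ on $(\D_{\R^{2d}}[0,\infty),\dJ)$ and the $H^n$ are adapted to \eqref{eq:filtration_CTRWs}, only two things will remain to be checked: \textbf{(a)} that the integrators $S^n$ admit good decompositions in the sense of \cite[Def.~3.3]{andreasfabrice_theorypaper}, and \textbf{(b)} that $(H^n,S^n)$ satisfies the AVCI condition of \cite[Def.~3.2]{andreasfabrice_theorypaper}. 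Once (a) and (b) are in place, \cite[Thm.~3.6]{andreasfabrice_theorypaper} will give at once that $S=e^{\tilde{R}_\bullet+r\bullet}$ is a semimartingale with respect to the filtration generated by $(H,S)$ and the asserted weak convergence on $(\D_{\R^{3d}}[0,\infty),\dJ)$.

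For (b) I would argue as in Proposition~\ref{eq:fixed_integrand_J1}: by \cite[Prop.~3.8]{andreasfabrice_theorypaper}, AVCI for $(H^n,S^n)$ follows from the assumed J1 convergence of the pair as soon as $S$ almost surely has no jump at a deterministic time. This is indeed the case, since $\Delta S_t=S_{t-}(e^{\Delta\tilde{R}_t}-1)$, so that $S$ can only jump where $\tilde{R}\in\{Z_{D^{-1}},\,((Z^-)_{D^{-1}})^+\}$ does, and $\tilde{R}$ almost surely jumps at no deterministic time.

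The real work is (a), which amounts to showing that the good-decomposition property survives the application of the $C^2$ map $x\mapsto e^x$. For each fixed $n$ the process $R^n$ is pure-jump and of finite variation on compacts --- it is constant between the renewal times $t_i=L_i/n$ --- so the change-of-variables formula gives $S^n_t=1+\int_0^t S^n_{s-}\,\diff R^n_s+V^n_t$, where $V^n_t:=\sum_{0<s\le t}S^n_{s-}(e^{\Delta R^n_s}-1-\Delta R^n_s)$ is of finite variation. Writing $R^n=\tilde{R}^n_\bullet+B^n_\bullet$ with $B^n_t:=\sum_{i=1}^{N(nt)}n^{-1}J_i r$, and combining the good decomposition $\tilde{R}^n=\tilde{M}^n+\tilde{A}^n$ of the CTRW from \cite[Thm.~3.25]{andreasfabrice_theorypaper} with the fact that $B^n$ is increasing with $\mathrm{TV}_{[0,T]}(B^n)=B^n_T\le rT$, one obtains $R^n=\tilde{M}^n+(\tilde{A}^n+B^n)$ as a good decomposition of $R^n$, whence $S^n-1=\mathcal{M}^n+\mathcal{A}^n$ with $\mathcal{M}^n:=\int_0^\bullet S^n_{s-}\,\diff\tilde{M}^n_s$ a local martingale and $\mathcal{A}^n:=\int_0^\bullet S^n_{s-}\,\diff(\tilde{A}^n+B^n)_s+V^n$ of finite variation, all adapted to \eqref{eq:filtration_CTRWs}. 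The decisive point is that the càdlàg integrand $S^n_{s-}$ (which, like $R^n$, is adapted to \eqref{eq:filtration_CTRWs}) is bounded on $[0,T]$ by $(S^n)^{*}_T=e^{(R^n)^{*}_T}$, which is tight because $R^n$ is J1-tight. With this at hand, routine estimates --- using $|e^x-1-x|\le\tfrac{1}{2}x^2e^{|x|}$, the tightness of $[\tilde{M}^n]_T$, of $\mathrm{TV}_{[0,T]}(\tilde{A}^n)$ and hence of $[\tilde{R}^n]_T$ coming from the CTRW's good decomposition, and the deterministic control $\sum_{0<s\le T}(\Delta B^n_s)^2\le r^2T^2$ --- show that $[\mathcal{M}^n]_T\le\bigl((S^n)^{*}_T\bigr)^2[\tilde{M}^n]_T$, $\mathrm{TV}_{[0,T]}(\mathcal{A}^n)$ and $\operatorname{sup}_{s\le T}|\Delta S^n_s|$ all remain tight; hence all the quantities required by \cite[Def.~3.3]{andreasfabrice_theorypaper} are tight and $(S^n)$ has a good decomposition.

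I expect the bookkeeping in (a) --- verifying that $x\mapsto e^x$ carries the good decomposition of $R^n$ onto one of $S^n$ without destroying any of the uniform tightness requirements --- to be the main obstacle, the J1 tightness of $R^n$ (and hence of $(S^n)^{*}_T=e^{(R^n)^{*}_T}$) being the ingredient that tames both the quadratic variation of $\mathcal{M}^n$ and the finite-variation corrections coming from the jumps. After that, \cite[Thm.~3.6 \& Rem.~3.11]{andreasfabrice_theorypaper} closes the argument.
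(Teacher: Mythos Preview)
Your overall strategy is correct and coincides with the paper's: reduce to \cite[Thm.~3.6]{andreasfabrice_theorypaper} by verifying good decompositions for $S^n$ and AVCI for $(H^n,S^n)$. The differences are only in how much work you do for each ingredient.

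For (a), the paper avoids your explicit It\^o computation entirely: it first notes that the $R^n$ have good decompositions (combining \cite[Thm.~3.25]{andreasfabrice_theorypaper} for the CTRW part $\tilde{R}^n$ with the deterministic bound \eqref{eq:sums_risk-free} for the drift $B^n$), and then simply invokes \cite[Rem.~3.20]{andreasfabrice_theorypaper}, which records precisely that good decompositions are preserved under smooth transformations such as $x\mapsto e^x$. Your hands-on argument is correct and in effect reproves this remark in the special case at hand; the paper's route is shorter but yours makes transparent where the J1 tightness of $R^n$ enters.

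For (b), the paper uses \cite[Prop.~3.8(1)]{andreasfabrice_theorypaper}, which gives AVCI directly from the assumed joint J1 convergence $(H^n,S^n)\Rightarrow(H,S)$; the verification that $S$ has no jump at a fixed time is not needed here (that criterion is what is used in Proposition~\ref{eq:fixed_integrand_J1}, where the integrands only converge in M1). Your argument is still valid, just slightly more than necessary.
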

\begin{proof}
 First note that the $R^n$ from \eqref{eq:CTRW_asset_price} have good decompositions due to \cite[Thm.~3.25]{andreasfabrice_theorypaper} and \eqref{eq:sums_risk-free}, so \cite[Rem.~3.20]{andreasfabrice_theorypaper} gives that also the $S^n$ have good decompositions. As a consequence of the convergence together of the pairs $(H^n,S^n)$, AVCI is satisfied by 
\cite[Prop.~3.8(1)]{andreasfabrice_theorypaper}, and so \cite[Thm.~3.6]{andreasfabrice_theorypaper} yields the conclusion.
\end{proof}

\begin{remark}
    Following \cite[Sect.~5]{duffie_protter}, the convergence together of $(H^n, S^n)$ on $(\D_{\R^{2d}}[0,\infty)\, ,\, \dJ )$ can, for example, be seen to be valid in the following simple cases, based on continuous functions $h_n(t,x)$ converging uniformly to some (continuous) $h(t,x)$. We can have $H^n_t=h_n(t,S^n_{t})$ and $H_t=h(t,S_t)$. Or, we can have that the $H^n$ are discretisations of $h_n(t,S^n_{t})$ of the form
    $$ H^n_t \; := \; \sum_{k=0}^\infty \, h_n(\tau^n_k, S^n_{\tau^n_k}) \, \ind_{[\tau^n_k,\tau^n_{k+1})}(t)$$
    where $0\equiv \tau^n_0< \tau^n_1< \tau^n_2...$ are $S^n$--stopping times such that, for all $n\ge 1$, $\tau^n_k \to \infty$ as $k\to \infty$ almost surely, and the mesh size $\operatorname{sup}_{k\ge 1} |\tau^n_k-\tau^n_{k-1}|\to 0$ as $n\to \infty$.
\end{remark}

We note that, in Proposition \ref{prop:fin_stability_J1}, one could also consider weakly M1 convergent trading strategies or, more generally, trading strategies that converge in the sense of Proposition \cite[Prop.~3.22]{andreasfabrice_theorypaper}. However, then one must check that AVCI or its alternatives from \cite{andreasfabrice_theorypaper} are satisfied, as this is no longer automatic. Since it is natural to allow the limiting (left continuous) trading strategies to jump at the same time as the limiting (right continuous) price processes, the simple sufficient criterion in \cite[2 of Prop.~3.8]{andreasfabrice_theorypaper} is unlikely to be helpful here. However, it can then be useful to seek to verify the conditions of \cite[Thm.~4.8]{andreasfabrice_theorypaper} for the interplay of the $H^n$ and $S^n$ (as an alternative to AVCI) in order to deduce the J1 stability of the financial gains. 

In order to make the above precise, for any given process $Y$, define $\operatorname{Disc}_{\Pro}(Y):=\{s>0: \Pro(\Delta Y_s\neq 0)>0\}$ and the number of its $\delta$-increments, $$N_{\delta}^T(Y):= \operatorname{sup}\{m\ge 1\, : \, \exists (t_i)_{i=1}^{2m}\subseteq [0,T], \, t_{i}< t_{i+1}, \text{ such that } |Y_{t_{2i+1}}-Y_{t_{2i}}|>\delta\}.$$ Further, we shall write $(H^n, X^n) \Rightarrow_{f.d.d.\times\text{J1}} (H,X)$ on $D\subseteq [0,\infty)$ if
$$ ( H^n_{t_1},...,H^n_{t_\ell}, X^n) \; \Rightarrow \; (H_{t_1},...,H_{t_\ell},X) \qquad \text{ on } \quad (\R^{\ell d}, |\cdot|\, )\times (\mathbf{D}_{\mathbb{R}^{d}}[0,\infty),\, \dJ)$$
for all $\ell\ge 1$ and $t_1,...,t_\ell \in D$.

\begin{prop}
    Let the $\tilde{R}^n$ be uncorrelated uncoupled CTRWs (i.e.,~the $\zeta_k$ and $J_i$ are independent), and consider càdlàg processes $H^n$ adapted to the filtrations \eqref{eq:filtration_CTRWs}. Suppose there is a dense subset $D\subseteq [0,\infty) \setminus \operatorname{Disc}_{\Pro}(H,P)$ such that $(H^n, S^n) \Rightarrow_{f.d.d.\times\text{J1}} (H,S)$ on $D$ and the sequences $(|H^n|^*_T)_{n\ge 1}, (N^T_\delta(H^n))_{n\ge 1}$ are tight for each $T>0$ and $\delta>0$. Then, $S=e^{\tilde{R}_+r\bullet}$ with $\tilde{R}=Z_{D^{-1}}$ is a semimartingale for the filtration generated by the pair $(H,R)$ and it holds that
    $$\Big( S^n, \int_0^\bullet \, H^n_{s-} \; \diff S^{n}_{s}\Big) \; \Rightarrow \; \Big( S,\,  \int_0^\bullet \, H_{s-} \;  \diff S_s \Big) \qquad \text{on } \; \; (\mathbf{D}_{\mathbb{R}^{2d}}[0,\infty),\, \dJ).$$
\end{prop}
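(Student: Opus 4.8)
The plan is to deduce the statement from \cite[Thm.~4.8]{andreasfabrice_theorypaper}, which is precisely the criterion (an alternative to AVCI) anticipated in the paragraph above: its hypotheses are the good decomposition of the integrators, $f.d.d.\times\text{J1}$ convergence of the pair on a dense set avoiding the fixed discontinuities, and the two tightness conditions on $(|H^n|^*_T)_{n\ge1}$ and $(N^T_\delta(H^n))_{n\ge1}$. So the proof is essentially a matter of matching hypotheses.

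First I would record, exactly as in the proof of Proposition~\ref{prop:fin_stability_J1}, that the $S^n$ have good decompositions: the CTRW part $\tilde R^n$ has a good decomposition by \cite[Thm.~3.25]{andreasfabrice_theorypaper} (it is uncorrelated and uncoupled), the risk-free sums in \eqref{eq:CTRW_asset_price} are increasing and, by \eqref{eq:sums_risk-free}, weakly uniformly convergent to the continuous map $t\mapsto rt$, so $R^n$ still has a good decomposition, and then \cite[Rem.~3.20]{andreasfabrice_theorypaper} transfers this to $S^n=e^{R^n}$. Since the CTRWs are uncoupled, the scaling limit is $\tilde R=Z_{D^{-1}}$ with $Z$ and $D$ independent Lévy processes, hence $S=e^{\tilde R_\bullet+r\bullet}$.

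Next I would check the remaining inputs. The convergence $(H^n,S^n)\Rightarrow_{f.d.d.\times\text{J1}}(H,S)$ on the dense set $D$ and the tightness of $(|H^n|^*_T)_{n\ge1}$ and $(N^T_\delta(H^n))_{n\ge1}$ for each $T,\delta>0$ are assumed outright. The one point worth spelling out is that $D$ also avoids $\operatorname{Disc}_\Pro(S)$: since $Z$ and $D$ are independent Lévy processes, $Z_{D^{-1}}$ almost surely has no fixed-time discontinuity (as already used in the proof of Proposition~\ref{eq:fixed_integrand_J1}), and adding the continuous drift $r\bullet$ preserves this, so $\operatorname{Disc}_\Pro(S)=\emptyset$; thus the assumption $D\subseteq[0,\infty)\setminus\operatorname{Disc}_\Pro(H)$ already gives $D\cap\operatorname{Disc}_\Pro((H,S))=\emptyset$, as \cite[Thm.~4.8]{andreasfabrice_theorypaper} requires. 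Applying that theorem then yields that $\int_0^\bullet H_{s-}\diff S_s$ is well-defined, that $S$ is a semimartingale for the filtration generated by $(H,S)$---equivalently by $(H,R)$, since $S_\bullet=e^{R_\bullet}$ and $R_\bullet=\ln S_\bullet$ generate the same filtration---and the claimed joint convergence $(S^n,\int_0^\bullet H^n_{s-}\diff S^n_s)\Rightarrow(S,\int_0^\bullet H_{s-}\diff S_s)$ on $(\mathbf D_{\mathbb R^{2d}}[0,\infty),\dJ)$.

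The real content is thus entirely inside \cite[Thm.~4.8]{andreasfabrice_theorypaper}, and that is where I would expect the main obstacle to sit: because the strategies $H^n$ are controlled only through finite-dimensional distributions and not in any path topology, one must approximate $H$ by simple processes sampled at points of $D$, use $D\cap\operatorname{Disc}_\Pro(H)=\emptyset$ to pass these approximations to the limit (so that $H^n_{t_i}\to H_{t_i}$ with $H$ left-continuous at each $t_i$), and use the tightness of $|H^n|^*_T$ and of $N^T_\delta(H^n)$ to bound the approximation error uniformly in $n$; the vanishing of $\operatorname{Disc}_\Pro(S)$ is exactly what makes such a dense sampling set available. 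Beyond invoking that theorem, no additional argument is needed here.
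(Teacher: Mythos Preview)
Your strategy coincides with the paper's: both establish good decompositions for $S^n$ exactly as in Proposition~\ref{prop:fin_stability_J1} and then invoke \cite[Thm.~4.8]{andreasfabrice_theorypaper}. The difference lies in how the hypotheses of that theorem are checked. You assert that its conditions are precisely the good decomposition, the $f.d.d.\times\text{J1}$ convergence on $D$, and the two tightness bounds---all assumed outright---and apply it directly to $(H^n,S^n)$. The paper instead invokes \cite[Rem.~4.9]{andreasfabrice_theorypaper} to reduce the verification of the theorem's conditions (a) and (b) from $(H^n,S^n)$ to $(H^n,R^n)$, and then establishes condition (a) via \cite[Lem.~4.12]{andreasfabrice_theorypaper}, making explicit use of the adaptedness of the $H^n$ to the filtrations \eqref{eq:filtration_CTRWs}.

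This is worth flagging because your write-up never uses the adaptedness hypothesis, whereas in the paper it is precisely what feeds into \cite[Lem.~4.12]{andreasfabrice_theorypaper} to deliver condition (a). If your summary of the hypotheses of \cite[Thm.~4.8]{andreasfabrice_theorypaper} is complete, the two arguments are equivalent and the paper's detour through $R^n$ is just bookkeeping. But if condition (a) is a genuine structural requirement on the integrator--integrand interplay beyond the stated convergence and tightness (as the paper's appeal to \cite[Lem.~4.12]{andreasfabrice_theorypaper} suggests), then you have left it unverified, and the reduction to $R^n$ together with adaptedness is where the missing piece sits.
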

\begin{proof}
    As in the proof of Proposition \ref{prop:fin_stability_J1}, the $S^n$ have good decompositions. Towards an application of \cite[Thm.~4.8]{andreasfabrice_theorypaper}, according to \cite[Rem.~4.9]{andreasfabrice_theorypaper} it suffices to check its assumptions (a)\&(b) for $H^n$ and $R^n$. Those are indeed satisfied, where part (a) can be readily shown by means of \cite[Lem.~4.12]{andreasfabrice_theorypaper} due to the adaptedness of the $H^n$ to the natural filtration of the $R^n$. 
\end{proof}

For specific trading strategies, such as the type displayed in \cite[Eq. (4.2)]{andreasfabrice_theorypaper} with $J=0$, it turns out that one can also readily verify AVCI, if for example the $g_{n,i}$ are equicontinuous. Indeed, using this property and the J1 tightness criteria given in \cite[Thm.~12.4]{billingsley}, we see that AVCI follows directly from the J1 tightness of the integrators. Furthermore, this then also holds for uncorrelated coupled CTRWs due to \eqref{eq:coupled_CTRW_J1_conv}.

\subsection{On M1 stability of the financial gains}

Whilst not considered in \cite{jack}, it is worth briefly examining what happens to the stability of the financial gains if the price processes exhibit a correlation structure with respect to the innovations and therefore are weakly M1 convergent (but not J1 convergent). Indeed, the picture changes rather drastically in this case. Albeit in a different setting from the CTRW framework considered here, we note that A{\"i}t-Sahalia \& Jacod \cite{ait-sahalia_jacod} define limiting semimartingale asset prices to be compatible with tick-by-tick models precisely if there is weak M1 convergence.

Suppose the log-prices $R^n$ in \eqref{eq:CTRW_asset_price} have innovations $\zeta_i$ of the form \eqref{defi:CTRW} with $c_j>0$ for at least two indices so that we have the strict M1 convergence \eqref{eq:CTRW_M1_conv}. Up to the appropriate rescaling by a constant, this results in the same form of the limiting asset price $S$ and hence gives rise to the same option pricing theory as in \cite{jack}. However, the stability of the financial gains is now lost in general. Indeed, for $1<\alpha<2$, \cite[Prop.~4.6]{andreasfabrice_theorypaper} shows that there exist admissible trading strategies, almost surely uniformly vanishing in the limit, for which there is nevertheless a blow-up of the associated financial gain. Intuitively, this lack of stability is coherent with the autoregressive correlation structure of the price innovations, as this in effect opens up the floodgates to anticipating---and hence exploiting---the nature of future price moves (noting also that the total variation of the price processes is not tight).

In the spirit of the Bachelier model, `arithmetic' L\'evy models (as opposed to `geometric'/exponential Lévy models) are also considered in the financial literature. For this final part of the section, we switch to consider `arithmetic' CTRW price processes $X^n$, where the $X^n$ are correlated CTRWs as in \eqref{defi:CTRW} with strict M1 convergence to a (sub-diffusive) subordinated $\alpha$-stable L\'evy process. As above, stability of the financial gains is lost (owing to a lack of good decompositions for the $X^n$), but the particular structure of having the $X^n$ as integrators nevertheless allows us to derive simple and intuitive sufficient criteria for recovering stability. For example, we can restrict the class of admissible trading strategies $H^n$ in such a way that, at any time, they are independent of the lagged price innovations that are part of determining the future price innovations: that is, if the lagged dependence of $X^n$ stretches over $\mathcal J+1$ price moves, then the portfolio at time $t$ is based solely on decisions made at least $\mathcal J+1$ prices moves ago. As $n\rightarrow \infty$, this window converges to zero, so, asymptotically, it is a mild restriction on how fast an investor can act relative to how the market moves.

To be specific, a class of trading strategies $H^n$ that are admissible in the sense of the above discussion are processes of the form\\[-2ex]
\[\label{eq:Hn_fin_application} H^n_t \; := \; \sum_{i=1}^{\infty} \, g_{n}(t^n_i,X^n_{ \Xi^n(t^n_i)}) \, \ind_{[t^n_i, \, t^n_{i+1})}(t)
\]
 with $\Xi^n(t^n_i):= \operatorname{max}\{\sigma^n_{k-\mathcal J-1} \, : \, k\ge 1 \text{ with } k-\mathcal J-1\ge 1, \, \sigma^n_k\le t^n_i\}$, where $\sigma^n_k=L_k/n$ is the $k$-th jump time of $X^n$, and where $t^n_i\uparrow \infty$ as $i \to \infty$,  $\operatorname{sup}_{i}|t^n_{i+1}-t^n_i| \to 0$ as $n\to \infty$ (we set $X^n_{\operatorname{max}\{\emptyset\}}:=0$). If the $g_{n}$ are sufficiently well-behaved, e.g. continuous and uniformly converging to some continuous, non-decreasing map $g$, then it is not hard to deduce the weak convergence of the $H^n$ on the M1 space from that same convergence of the $X^n$. In addition, we take the i.i.d.~innovations of the CTRWs $X^n$ to be centered for $1<\alpha<2$ and their common law to be symmetric if $\alpha=1$. The particular form of the $H^n$ directly implies \cite[(4.8)]{andreasfabrice} and since \eqref{eq:technical_condition} is trivially satisfied, on behalf of \cite[Thm.~4.15]{andreasfabrice} we obtain the convergence $(X^n, \int_0^\bullet H^n_{s-} \diff X^n_s) \Rightarrow (X, \int_0^\bullet H_{s-} \diff X_s)$ in M1 with $X=(\sum_{j=0}^\mathcal{J} c_j)Z_{D^{-1}}$ and $H_t=g(t,X_{t-})$. 

Alternatively, there is another intuitive approach to recovering the stability of the financial gains in the framework of finitely lagged CTRWs (i.e., $c_j=0$ for all $j> K$, for some $K$). Indeed, one can require the trading strategies $H^n$ to exhibit sufficient inertia in a continuous fashion, thus making them `too gradual' in their reaction to critical new price moves. More details on this particular approach can be found in \cite[Thm. 4.14]{andreasfabrice}.

\section{Reserves with risky investments in insurance mathematics}
\label{subsect:insurance}

Consider an insurance company whose total cash flow of premiums net of claims is described by a stochastic process $P_t$. Assume further that this company is investing the entirety of its current reserves, denoted by $U_t$, in a risky asset (or index fund) with price $S_t$ at time $t$. Then, the reserves will evolve according to the SDE
\begin{equation}\label{eq:total_reserves_SDE}
	\diff U_t = U_{t-} \frac{\diff S_t}{S_{t-}} + \diff P_t.
\end{equation}
Following Paulsen \cite[Sect.~2]{Paulsen_AAP} and Kalashnikov \& Norberg \cite[Sect.~2.2]{Kalashnikov_Norberg}, consider two independent L\'evy processes $R$ and $P$, where $R=\log (S)$, and note that the solution to \eqref{eq:total_reserves_SDE} is then given by
\begin{equation}\label{eq:total_reserves_solutoin}
	U_t = e^{R_t} \Bigl(U_0+ \int_0^t e^{-R_{s-}} \diff P_s\Bigr),\quad t\geq 0.
\end{equation}
As in the previous section, motivated by a CTRW model, one could also consider modelling the log-return $R$ by a time-changed L\'evy process. Similarly, we refer to Constantinescu, Ramirez \& Zhu \cite{constantinescu} for examples of CTRWs with non-exponential waiting times in the modelling of claims, which could lead to $P$ being modelled by a time-changed L\`evy process (possibly with an additional non-time-changed component for the premiums, which we leave out here for simplicity of notation). Maintaining the independence of $P$ and $R$ (due to independence of the financial and insurance risk), one still has $[P,R]=0$, so the representation \eqref{eq:total_reserves_solutoin} continues to hold.

Now suppose there is an underlying CTRW model, where the log-returns $R^{n}$ and the net cash flows $P^{n}$ are given by moving averages or uncoupled CTRWs \eqref{defi:CTRW}, either uncorrelated or correlated, e.g.~to capture large losses followed by further large losses. As before, we take $R^n$ and $P^n$ to be independent. Allowing for heavy-tailed waiting times and heavy-tailed innovations, the usual assumptions (see Section \ref{subsect:CTRW}) give that $R^n$ and $P^n$ will be weakly convergent, in the M1 or J1 topology, to a pair of independent, possibly subordinated stable Lévy processes. We denote these limits by $P$ and $R$, in line with the above. Furthermore, for each $n\geq 0$, we write $S^n:=\operatorname{exp}(R^n)$ for the price process and note that the corresponding reserves are given by
\[
U^n_t= e^{R^n_t}\Bigl(U_0 + \int_0^\bullet e^{-R^n_{s-}} \diff P^n_s\Bigr),\quad t\geq0.
\]

The following result makes precise the realisation of \eqref{eq:total_reserves_SDE}-\eqref{eq:total_reserves_solutoin} as a limit of the CTRW models just described. We note that a similar result for limiting processes $R$ and $P$ given by Brownian motion with drift was established in \cite{Paulsen_Gjessing_Advances}.

\begin{prop}
    Let $R_n$, $P_n$ be two independent correlated uncoupled CTRWs of the form \eqref{defi:CTRW} so that $(R^n,P^n) \Rightarrow (R,P)$ on $(\D_{\R}[0,\infty), \tilde\rho) \times (\D_{\R}[0,\infty), \rho)$ with $\tilde\rho,\rho \in \{\dJ,\dM\}$ (depending on whether the CTRWs are correlated or not). If the $P^n$ are strictly M1 convergent correlated CTRWs, assume that \eqref{eq:technical_condition} holds, and assume that the innovations are centered if $1<\alpha<2$ or symmetric around zero if $\alpha=1$. Then, we have
    \begin{align} 
    (S^n, P^n, U^n) \; \Rightarrow \; (S, P, U) \quad \text{ on } \quad (\D_{\R}[0,\infty), \dM) \times (\D_{\R^{2}}[0,\infty), \dM) \label{eq:insurance_convergence_goal}
    \end{align}
    and the convergence holds on $(\D_{\R^{3}}[0,\infty), \dJ)$ if $\rho=\tilde\rho=\dJ$.
\end{prop}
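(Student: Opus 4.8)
The plan is to reduce the whole statement to a single new ingredient---the joint weak convergence of $\int_0^\bullet e^{-R^n_{s-}}\diff P^n_s$ together with its integrand and integrator---and to recover $S^n$ and $U^n$ from it by continuous mapping, exactly as in the two paragraphs preceding the statement. First I would carry out the reductions common to all cases. Since $R^n$ and $P^n$ are independent, the limits $R$ and $P$ are independent (subordinated) stable L\'evy processes, so almost surely they have no fixed times of discontinuity and no common discontinuities; in particular $\operatorname{Disc}_{\Pro}(e^{-R})\cap\operatorname{Disc}_{\Pro}(P)=\emptyset$ up to a null set. I would enlarge each filtration \eqref{eq:filtration_CTRWs} of $P^n$ so that it also carries the independent process $R^n$, which makes $e^{-R^n}$ adapted while leaving intact the properties of $P^n$ relative to its own filtration (this is the flexibility recorded after \eqref{eq:filtration_CTRWs} in the uncorrelated case; in the correlated case the relevant properties of $P^n$ involve only $P^n$, hence are unaffected). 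By \cite[Prop.~3.8(2)]{andreasfabrice_theorypaper}, the absence of common discontinuities in the limit then gives AVCI for the pairs $(e^{-R^n},P^n)$. I would also note that $x\mapsto e^{\pm x}$ is continuous and monotone, so $S^n=e^{R^n}\Rightarrow S=e^{R}$ and $e^{-R^n}\Rightarrow e^{-R}$ in the topology $\tilde\rho$ of $R^n$, and---since $R^n\indep P^n$ and the limits have no common jumps---that the pair $(e^{-R^n},P^n)$ converges to $(e^{-R},P)$ in the joint sense needed below (on $(\D_{\R^2},\dJ)$ when $P^n$ is J1 convergent).

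If $P^n$ is uncorrelated it admits a good decomposition by \cite[Thm.~3.25]{andreasfabrice_theorypaper}, so, combining this with AVCI, \cite[Thm.~3.6]{andreasfabrice_theorypaper}---or its M1 form \cite[Prop.~3.22]{andreasfabrice_theorypaper} if $R^n$ is only M1 convergent, whose conditions F1--F3 are immediate since the integrand is a fixed continuous function of the M1-convergent $R^n$---gives the joint convergence of $\bigl(e^{-R^n},P^n,\int_0^\bullet e^{-R^n_{s-}}\diff P^n_s\bigr)$. Adding $U_0$ and multiplying by $S^n=e^{R^n}$, and using that multiplication into $\D$ is continuous at limit points without common discontinuities (\cite[Thm.~13.3.2]{whitt} for J1), the generalised continuous mapping theorem yields $U^n=e^{R^n}\bigl(U_0+\int_0^\bullet e^{-R^n_{s-}}\diff P^n_s\bigr)\Rightarrow U$ jointly with $(S^n,P^n)$; this lands on $(\D_{\R^3},\dJ)$ when both $R^n$ and $P^n$ are J1 convergent, and in the product space otherwise.

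If instead $P^n$ is correlated and strictly M1 convergent, then \eqref{eq:technical_condition} holds and the innovations of $P^n$ are centered for $1<\alpha<2$, resp.\ symmetric for $\alpha=1$; now the $P^n$ lack good decompositions and I would invoke \cite[Thm.~4.15]{andreasfabrice} instead. Its key hypothesis---that the integrand be, at each time, independent of the future trajectory of the integrator---holds here precisely because $R^n\indep P^n$, so $e^{-R^n}$ is independent of the future increments of $P^n$ (and of the lagged innovations co-determining them); applying it, with the constant rescaling absorbed into $P=(\sum_{j=0}^\infty c_j)Z_{D^{-1}}$ as in \eqref{eq:CTRW_M1_conv}, gives the joint M1 convergence of $\bigl(e^{-R^n},P^n,\int_0^\bullet e^{-R^n_{s-}}\diff P^n_s\bigr)$, and the same continuous-mapping step, now in M1, concludes.

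I expect the main obstacle to be this last case: one must verify carefully that ``$R^n\indep P^n$'' supplies exactly the independence-of-the-future-trajectory hypothesis of \cite[Thm.~4.15]{andreasfabrice}, and that the centering/symmetry assumption is what neutralises the small-jump contributions responsible for the blow-up phenomenon of \cite[Prop.~4.6]{andreasfabrice_theorypaper} when $1<\alpha<2$, resp.\ $\alpha=1$. A secondary, more routine difficulty is the topology bookkeeping---any component assembled from both $R^n$ and $P^n$ can only be controlled in the coarser of the two topologies, so the full joint statement on $\D_{\R^3}$ is available only when $\tilde\rho=\rho$, and in mixed cases one is confined to the product space.
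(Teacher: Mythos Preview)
Your proposal is correct and follows essentially the same route as the paper: AVCI via \cite[Prop.~3.8(2)]{andreasfabrice_theorypaper} from the independence of $R$ and $P$, good decompositions from \cite[Thm.~3.25]{andreasfabrice_theorypaper} together with \cite[Thm.~3.6]{andreasfabrice_theorypaper} in the uncorrelated case, \cite[Thm.~4.15]{andreasfabrice} exploiting $R^n\indep P^n$ in the correlated case, and then the generalised continuous mapping theorem with \cite[Thm.~13.3.2]{whitt} to pass from the stochastic integral to $U^n$. Your added remarks on filtration enlargement, the use of \cite[Prop.~3.22]{andreasfabrice_theorypaper} when $R^n$ is only M1 convergent, and the topology bookkeeping for the mixed $\tilde\rho\neq\rho$ case are welcome elaborations but do not change the overall strategy.
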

\begin{proof}
    Due to $s\mapsto \exp(-s)$ being uniformly continuous on compacts and $x_n \to x$ on the J1 space, it directly follows from the definition of standard metric on the J1 space that also $\exp(-x_n) \to \exp(-x)$ in J1. Since, in addition, $s\mapsto \exp(-s)$ is monotone, convergence also propagates to M1. To see this, note that it suffices to show that the sequence $\{ \exp(-x_n)\}$ is relatively compact in M1 as pointwise convergence at times in a co-countable set holds as a consequence of the continuity the exponential function and the M1 convergence $x_n \to x$. Making use of \cite[Thm.~A.8]{andreasfabrice_theorypaper}, (a) is immediate and for (b) it clearly holds for any $0\le t_1<t_2<t_2\le T$, $t_3-t_1\le \theta$, 
    \begin{align*}
        &\norm{\, \exp\{-x_n(t_2)\}-[\exp\{-x_n(t_1)\},\exp\{-x_n(t_3)\}]\, } \\[1ex]
        & \qquad \quad \le \; 2 \operatorname{sup}\{|\exp(-z_1)-\exp(-z_2)| : |z_1|,|z_2|\le|x_n|^*_T, \, |z_1-z_2|\le \tilde{w}(x_n,\theta/2) \} \, =: \, \Lambda(x_n,\theta) 
    \end{align*}
    where we stress that $\Lambda$ does only depend on $\theta$ and $x_n$ and thus $\tilde{w}(\exp(-x_n), \theta/2) \le \Lambda(x_n,\theta)$. Given that the sequence $\{x_n\}$ is relatively compact on the M1 space, $\operatorname{sup}_{n\ge 1}|x_n|^*_T< \infty$ and $\lim_{\theta \downarrow 0}\operatorname{sup}_{n\ge 1} \tilde{w}(x_n)=0$. The uniform continuity of the exponential function then implies $\lim_{\theta \downarrow 0} \operatorname{sup}_{n\ge 1} \Lambda(x_n,\theta)=0$. Thus, we deduce the M1 relative compactness of $\{\exp(-x_n)\}$ and hence the continuity of $x\mapsto (s\mapsto \exp(-s))$ on the M1 space, implying---on behalf of the continuous mapping theorem---the convergence $(S_n, P^n) \Rightarrow (S,P)$ on $(\D_{\R}[0,\infty), \tilde\rho) \times (\D_{\R}[0,\infty), \rho)$.

    In addition, it turns out that the (subordinated) independent Lévy processes $R$ and $P$ almost surely do not jump at the same time. Indeed, each of the two processes is of the form $cZ_{D^{-1}}$ with $Z$ and $D$ independent Lévy processes (where independence is a result of the approximating CTRWs being uncoupled) as well as $D^{-1}$ the generalised inverse defined as in \eqref{eq:generalised_inverse}. If we can show that $R$, $P$ do almost surely not jump at fixed times, then it is well-known that, by independence, $R$ and $P$ almost surely have no common jumps. Towards this aim, fix $t>0$ and let $T>0$ as well as $\{\tau_m\}$ be a sequence of stopping times such that $\{0<s\le T: |\Delta Z_s|>0\} \subseteq \bigcup_{m=1}^\infty [\tau_m]$. The Lévy process $Z$ not having any fixed time discontinuities, $\Pro(\tau_m=s) \le \Pro(|\Delta Z_s|>0)=0$ and therefore each of the distribution functions $s\mapsto F_{\tau_m}(s):=\Pro(\tau_m\le s)$ is continuous. Hence, by independence of $\tau_m$ and $D^{-1}(t)$ and since $\Delta F_{\tau_m}(s)=0$ for all $s>0$,\\[-4ex]
    \begin{align}
        &\Pro(|\Delta Z_{D^{-1}(t)}|>0) \; \le \; \Pro(D^{-1}(t)> T) \; + \; \sum_{m=1}^\infty \Pro(\tau_m=D^{-1}(t)) \notag \\
        &\quad= \; \Pro(D^{-1}(t)> T) \; + \; \sum_{m=1}^\infty \, \int_0^\infty \int_0^\infty \ind_{\{s=r\}}(s,r) \, F_{\tau_m}(\diff s)\, F_{D^{-1}(t)}(\diff r) \notag \\
        &\quad= \; \Pro(D^{-1}(t)> T) \; + \; \sum_{m=1}^\infty \, \int_0^\infty \Delta F_{\tau_m}(r)\, F_{D^{-1}(t)}(\diff r) \; = \; \Pro(D^{-1}(t)> T),\label{eq:subordinated_levy_process_does_not_jump_at_deterministic_times}
    \end{align}
    where we have denoted $F_{D^{-1}(t)}$ the distribution function of the random variable $D^{-1}(t)$. The right side of \eqref{eq:subordinated_levy_process_does_not_jump_at_deterministic_times} tending to zero as $T\to \infty$, we deduce that with full probability, $Z_{D^{-1}}$ does not jump at any given deterministic time. Thus, $R$ and $P$ almost surely do not have any common discontinuities and consequently the pairs $(e^{-R^n}, P^n)$ satisfy AVCI due to \cite[Prop.~3.8(2)]{andreasfabrice_theorypaper}.

    If the $P^n$ are uncorrelated CTRWs and thus admit good decompositions by \cite[Thm.~3.25]{andreasfabrice_theorypaper}, an application of \cite[Thm.~3.6]{andreasfabrice_theorypaper} gives $(P^n,\int_0^\bullet e^{-R^n_{s-}} \diff P^n_s) \Rightarrow (P, \int_0^\bullet e^{-R_{s-}} \diff P_s)$ on $(\D_{\R^2}[0,\infty), \dJ)$ and thus $(S^n,P^n,\int_0^\bullet e^{-R^n_{s-}} \diff P^n_s) \Rightarrow (S,P, \int_0^\bullet e^{-R_{s-}} \diff P_s)$ on $(\D_{\R}[0,\infty), \tilde\rho)\times (\D_{\R^2}[0,\infty), \dJ)$. As multiplication is a continuous operation in J1 at all limit points \emph{without} common discontinuities (see \cite[Thm.~13.3.2]{whitt}), an application of the generalised continuous mapping theorem yields the desired convergence \eqref{eq:insurance_convergence_goal} as the $P,S$ almost surely do not have common discontinuities.

    Instead, if we consider correlated CTRWs $P^n$, one could in principle proceed in complete analogy, but now the $P^n$ will generally not admit good decompositions if $1< \alpha < 2$ (recall \cite[Prop.~4.6]{andreasfabrice_theorypaper}). Nevertheless, due to the independence of the processes $R^n$ and $P^n$, it is indeed possible to obtain \eqref{eq:insurance_convergence_goal}, via a suitable extension of the results considered in \cite{andreasfabrice_theorypaper}, namely \cite[Thm.~4.15]{andreasfabrice}, under the given assumption that the innovations of the $P^n$ are centered if $1<\alpha<2$ and their laws being symmetric if $\alpha=1$. 
\end{proof}

\medskip

\noindent\textbf{Acknowledgments.} We thank Lorenzo Torricelli for insightful discussions concerning the contents of Section \ref{sect:asset_prices}. The research of FW was funded by the EPSRC grant EP/S023925/1.

%%%%%%%%%%%%%%%%%%%%%%%%%

%%%%%%%%%%%%%% Bibliography
% \nocite{*}
\printbibliography

\end{document}